\begin{document}

\mainmatter  

\title{Modules over linear spaces admitting a multiplicative basis}

\titlerunning{Modules over linear spaces admitting a multiplicative basis}

%
%
\author{Antonio J. Calder\'on Mart\'{\i}n%
\thanks{The first and the third authors are supported by the PCI of the UCA `Teor\'\i a de Lie y Teor\'\i a de Espacios de Banach', by the PAI with project numbers FQM298, FQM7156 and by the project of the Spanish Ministerio de Educaci\'on y Ciencia MTM2010-15223. Third author acknowledges the University of Cadiz for the contract research.}
  \and Francisco J. Navarro Izquierdo \and Jos\'e M. S\'anchez Delgado}

\authorrunning{Modules over linear spaces admitting a multiplicative basis}

\institute{Department of Mathematics\\Faculty of Sciences, University of C\'adiz\\
Campus de Puerto Real, 11510, Puerto Real, C\'adiz, Spain.\\
ajesus.calderon@uca.es; javi.navarroiz@uca.es; txema.sanchez@uca.es}

%
%

\toctitle{Modules over linear spaces admitting a multiplicative basis}
\tocauthor{A.J. Calder\'on, F.J. Navarro, J.M. S\'anchez}
\maketitle

\begin{abstract}
We study the structure of certain  modules $V$ over linear spaces
$W$  with restrictions neither on the dimensions nor on the base
field $\mathbb F$. A basis $\mathfrak B = \{v_i\}_{i\in I}$ of $V$
is called multiplicative respect to the basis $\mathfrak B' =
\{w_j\}_{j \in J}$ of $W$ if for any $i \in I, j \in J$ we have
either $v_iw_j = 0$ or $0 \neq v_iw_j \in \mathbb Fv_k$ for some
$k \in I$. We show that if $V$ admits a multiplicative basis then
it decomposes as the direct sum $V=\bigoplus_k  V_k$ of
well-described submodules admitting each one a multiplicative
basis. Also the minimality of $V$ is characterized in terms of the
multiplicative basis and it is shown that  the above direct sum is
by means of the family of its minimal submodules, admitting each
one a multiplicative basis.

\medskip

{\it Keywords}: Multiplicative basis,   infinite dimensional linear space, module over an algebra, representation theory, structure theory.
\end{abstract}

\section{Introduction and previous definitions}

We begin by noting that throughout this paper linear spaces $V$ and $W$ are considered of arbitrary dimensions and over an arbitrary base field $\mathbb F$, and also   the increasing interest in the study of modules over different classes of algebras, and so over linear spaces, specially motivated by their relation with mathematical physics (see \cite{phy1}, \cite{phy2}, \cite{phy3}, \cite{phy4}, \cite{phy5}, \cite{chino1}, \cite{chino2}).
\begin{definition}\rm
Let $V$ be a vector space over an arbitrary base field $\mathbb{F}$. It is said that
$V$ {\it is moduled by a linear space} $W$ (over same base field $\mathbb{F}$), or just that
  $V$ is a {\it  module} over $W$ if it is endowed with a bilinear map
   $V \times W \to V$,  $\hspace{0.2cm}(v, w) \mapsto vw$.
\end{definition}

 Any kind of algebra is an example of a  module over itself. Since the even part $L^0$ of the standard embedding of a Lie triple system $T$ is a Lie algebra, the natural action of $L^0$ over $T$ makes of $T$ a (Lie) module over $L^0$. Hence the present paper extend the results in \cite{Yo_Arb_Alg}.

\begin{definition}\label{11}\rm
Let $V$ be a  module over  the linear space $W$. Given a basis
$\mathfrak{B}'=\{w_j\}_{j\in J}$ of $W$ we say that a basis
$\mathfrak{B}=\{v_i\}_{i\in I}$ of $V$ is {\it multiplicative}
respect $\mathfrak{B}'$ if for any $i \in I$ and $j \in J$ we have
either $v_iw_j=0$ or $0 \neq v_iw_j\in\mathbb{F}v_k$ for some
(unique) $k\in I$.
\end{definition}

To construct examples of modules over linear spaces admitting a
multiplicative basis, we just have to fix two non-empty sets $I$,
$J$ and two arbitrary mappings $\alpha: I \times J \to I$ and
$\beta : I \times J \to \mathbb{F}.$ Then the ${\mathbb F}$-linear
space $V$ with basis  $\mathfrak{B}=\{v_i\}_{i\in I}$ is a module
respect to the ${\mathbb F}$-linear space $W$ with basis
$\mathfrak{B}'=\{w_j\}_{j\in J}$, under the  action induced  by
$v_iw_j := \beta(i,j)v_{\alpha(i,j)}$, admitting   $\mathfrak{B}$
as multiplicative basis respect to $\mathfrak{B}'$.

\begin{remark}\rm
Definition \ref{11} agree with the one  for arbitrary algebras
given in \cite{Yo_Arb_Alg},  and it is a little bit more general
than the usual one in the literature (\cite{m5,m2,m4,Ref,m3}).
\end{remark}

\section{Connections in the set of indexes. Decompositions}

From now on and throughout the paper, $V$ denotes a  module over the linear space $W$, both endowed with respective basis $\mathfrak{B}=\{v_i\}_{i\in I}, \mathfrak{B}'=\{w_j\}_{j \in J}$, and being $\mathfrak{B}$ multiplicative respect to $\mathfrak{B}'$. We denote by $\mathcal P(I)$ the power set of $I$.

\medskip

We begin this section by developing connection techniques among the elements in the set of indexes $I$ as the main tool in our study. For each $j \in J$, a new variable $\overline{j} \notin J$ is introduced and we denote by $\overline{J} := \{\overline j : j \in J\}$ the set of all these new symbols.
We will also write $\overline{(\overline{j})} := j \in J$.
\medskip

We consider the operation $\star: I \times (J\;\dot\cup\;\overline{J}) \to \mathcal{P}(I)$ given by:

\begin{itemize}
\item If $i\in I$ and $j\in J$,
$$\displaystyle i\star j := \left\{
\begin{array}{ccl}
\emptyset &\text{if}& 0=v_iw_j\\
\{k\} &\text{if} &  0\neq v_iw_j\in\mathbb{F}v_k
\end{array}
\right.$$
\item If $i\in I$ and $\overline j\in \overline J$,
$$i\star \overline j:=\{k : 0 \neq v_kw_j \in \mathbb{F}v_i\}$$
\end{itemize}

\medskip

Now, we also consider the mapping $\phi: \mathcal{P}(I)\times (J\;\dot\cup\;\overline{J}) \to \mathcal{P}(I)$ defined as $\phi(U,j) :=\bigcup_{i \in U} (i \star j)$.

\begin{lemma}\label{lema1}
Let  $a,b\in I$ be. Given $j\in J\;\dot\cup\;\overline{J}$ we have
that  $a\in b\star j$ if and only if $b\in a\star \overline{j}$.
\end{lemma}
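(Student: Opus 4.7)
The plan is to prove the equivalence by a direct case analysis on whether the parameter $j$ belongs to $J$ or to $\overline{J}$, and in each case simply to unwind the definition of $\star$ to exhibit a common symmetric condition of the form $0\neq v_?w_? \in \mathbb{F}v_?$ to which both sides reduce. Because the two bullets in the definition of $\star$ are patently dual to one another, no nontrivial computation is needed; the entire content of the lemma is a bookkeeping exercise in the involution $\overline{(\overline{j})} = j$.

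First I would treat the case $j \in J$. Using the first bullet in the definition of $\star$, the statement $a \in b\star j$ is equivalent to $0 \neq v_b w_j \in \mathbb{F}v_a$. On the other hand, since $\overline{j} \in \overline{J}$, the second bullet gives $a\star \overline{j} = \{k : 0\neq v_k w_j \in \mathbb{F}v_a\}$, and so $b \in a\star\overline{j}$ is equivalent to $0 \neq v_b w_j \in \mathbb{F}v_a$. The two conditions coincide verbatim, which settles this case.

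Next I would handle the case $j \in \overline{J}$, writing $j = \overline{j'}$ for some $j' \in J$. By the convention $\overline{(\overline{j'})} = j'$, we have $\overline{j} = j' \in J$, which places us in the symmetric situation relative to the first case. Applying the second bullet, $a \in b\star\overline{j'}$ is equivalent to $0 \neq v_a w_{j'} \in \mathbb{F}v_b$; applying the first bullet, $b \in a\star j'$ is equivalent to the same assertion $0 \neq v_a w_{j'} \in \mathbb{F}v_b$. Hence the equivalence holds again.

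The hardest aspect, such as it is, is simply to keep the roles of $j$ and $\overline{j}$ straight so that the two bullets are invoked in the correct order in each case; once the involution $\overline{(\overline{j})} = j$ is used to reduce the second case to the first, there is nothing further to verify.
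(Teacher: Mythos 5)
Your proof is correct and follows essentially the same route as the paper: a case split on $j\in J$ versus $j\in\overline{J}$, unwinding the two bullets of the definition of $\star$. Your version is marginally tidier in that each case reduces both sides to one and the same condition, so the "if and only if" comes for free, whereas the paper proves one implication and appeals to symmetry for the converse.
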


\begin{proof}
Let us suppose that $a\in b\star j$. If $j\in J$ then $v_bw_j \in {\mathbb F} v_a$, and if $j\in \overline{I}$ we have $ v_aw_{\overline{j}}\in {\mathbb F} v_b$. In any case $b\in a\star \overline{j}$. To prove the converse we can argue in a similar way.
\end{proof}

\begin{lemma}\label{lema2}
Given $j\in J\;\dot\cup\;\overline{J}$ and $U \subset \mathcal{P}(I)$ then $i \in \phi(U,j)$ if and only if $\phi(\{i\},\overline{j})\cap U \neq \emptyset$.
\end{lemma}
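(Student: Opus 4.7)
The plan is to prove this as a direct unfolding of the definition of $\phi$, with Lemma \ref{lema1} as the only non-trivial ingredient. The statement really just says that $\star$ becomes a kind of ``symmetric'' relation once we are allowed to flip $j$ to $\overline{j}$, and $\phi$ simply collects $\star$-images.

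First I would unwind the left-hand side: by definition of $\phi$,
$$\phi(U,j) = \bigcup_{k\in U}(k\star j),$$
so $i\in\phi(U,j)$ is equivalent to the existence of some $k\in U$ with $i\in k\star j$. Then I would apply Lemma \ref{lema1} to this membership: $i\in k\star j$ if and only if $k\in i\star\overline{j}$. Hence $i\in\phi(U,j)$ is equivalent to the existence of $k\in U$ with $k\in i\star\overline{j}$, i.e., $(i\star\overline{j})\cap U\neq\emptyset$.

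Finally, I would observe that the right-hand side is just the same set in disguise: by definition,
$$\phi(\{i\},\overline{j}) = \bigcup_{k\in\{i\}}(k\star\overline{j}) = i\star\overline{j},$$
so $\phi(\{i\},\overline{j})\cap U\neq\emptyset$ is precisely $(i\star\overline{j})\cap U\neq\emptyset$, closing the equivalence. Both implications go through simultaneously because every step is an ``if and only if''.

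There is essentially no obstacle here; the only subtlety is a notational one, namely remembering that $\overline{(\overline{j})}=j$ when $j\in\overline{J}$, so that Lemma \ref{lema1} applies uniformly whether $j\in J$ or $j\in\overline{J}$. Because of that convention, a single application of Lemma \ref{lema1} handles both cases without having to split the argument.
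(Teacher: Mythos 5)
Your proposal is correct and follows essentially the same route as the paper: unwind $\phi(U,j)=\bigcup_{k\in U}(k\star j)$, apply Lemma \ref{lema1} to convert $i\in k\star j$ into $k\in i\star\overline{j}=\phi(\{i\},\overline{j})$, and read off the intersection condition. The paper writes one direction and declares the converse symmetric, while you note every step is already an equivalence; this is a cosmetic difference only.
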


\begin{proof}
Let us suppose that $i\in\phi(U,j)$. Then there exists $k\in U$ such that $i\in k\star j$. By Lemma \ref{lema1} we have $k\in i\star\overline{j} = \phi(\{i\},\overline{j})$. So $k\in\phi(\{i\},\overline{j})\cap U\neq \emptyset.$ By arguing in a similar way the converse can be  proven.
\end{proof}

\begin{definition}\label{connection}\rm
Let $i, k \in I$ be  with $i \neq k$. We say that $i $ is {\it connected} to $k$ if there exists a subset $\{j_1,\dots,j_n\}\subset J\;\dot\cup\; \overline{J}$, such that  the following conditions hold:

\begin{enumerate}
\item [{\rm 1.}]
$\phi(\{i\},j_1)\neq\emptyset$, $\phi(\phi(\{i\},j_1),j_{2})\neq\emptyset, \dots, \phi(\phi(\dots\phi(\{i\},j_2)\dots),j_{n-1})\neq\emptyset$.

\medskip

\item [{\rm 2.}]
$k\in\phi(\phi(\dots\phi(\{i\},j_1)\dots),j_{n}).$
\end{enumerate}

We say that $\{j_1,\dots,j_n\}$ is a {\it connection} from $i$ to $k$ and we accept $i$ is connected to itself.
\end{definition}

\begin{lemma}\label{lema3}
Let $\{j_1, j_2, \dots, j_{n-1}, j_n\}$ be any connection from some $i$ to some $k$ where $i,k\in I$ with $i\neq k$. Then the set $\{\overline{j}_n,\overline{j}_{n-1},\dots,\overline{j}_2,\overline{j}_1\}$ is a connection from $k$ to $i$.
\end{lemma}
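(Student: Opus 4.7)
The plan is to prove the statement by induction along the reversed sequence, showing that the sets produced by the reversed connection always meet the sets produced by the original connection, so they are nonempty and the last one contains $i$.

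First I would set up notation: write $U_0:=\{i\}$ and $U_m:=\phi(U_{m-1},j_m)$ for $m=1,\dots,n$, so that the hypothesis that $\{j_1,\dots,j_n\}$ is a connection from $i$ to $k$ translates to $U_1,\dots,U_{n-1}\neq\emptyset$ and $k\in U_n$. Dually, write $W_0:=\{k\}$ and $W_m:=\phi(W_{m-1},\overline{j}_{n-m+1})$ for $m=1,\dots,n$; so $W_1=\phi(\{k\},\overline{j}_n)$, $W_2=\phi(W_1,\overline{j}_{n-1})$, and so on. What must be established is that $W_1,\dots,W_{n-1}\neq\emptyset$ and $i\in W_n$.

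The key claim, proved by induction on $m$, is
$$W_m\cap U_{n-m}\neq\emptyset\qquad\text{for every }m=0,1,\dots,n.$$
The base case $m=0$ is immediate from $k\in U_n$. For the inductive step, pick $a\in W_m\cap U_{n-m}$. Since $a\in U_{n-m}=\phi(U_{n-m-1},j_{n-m})$, Lemma \ref{lema2} provides some element in $\phi(\{a\},\overline{j}_{n-m})\cap U_{n-m-1}$. On the other hand $a\in W_m$ yields $\phi(\{a\},\overline{j}_{n-m})\subset\phi(W_m,\overline{j}_{n-m})=W_{m+1}$, so the witness above already lies in $W_{m+1}\cap U_{n-m-1}$, closing the induction.

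Finally I would read off the conclusion: the claim for $m=1,\dots,n-1$ forces each $W_m$ to be nonempty (condition 1 of Definition \ref{connection} for the reversed sequence), while for $m=n$ it gives $W_n\cap\{i\}\neq\emptyset$, i.e.\ $i\in W_n=\phi(\phi(\dots\phi(\{k\},\overline{j}_n)\dots),\overline{j}_1)$, which is condition 2. The only subtlety is keeping the two indexings in sync (the $m$-th stage of the reverse sequence uses $\overline{j}_{n-m+1}$ and should be compared with the $(n-m)$-th stage of the original), so the main obstacle is purely bookkeeping; the algebraic content is contained entirely in Lemma \ref{lema2}, which lets each step of the forward chain be reflected by a step of the backward chain.
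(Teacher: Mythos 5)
Your proof is correct and rests on the same key tool as the paper's, namely Lemma \ref{lema2} applied once per step to reflect the forward chain into the backward one; the only difference is organizational, since you run a single induction on the stage $m$ with the explicit invariant $W_m\cap U_{n-m}\neq\emptyset$, whereas the paper inducts on the length $n$ of the connection and peels off the last index. Both arguments are essentially the same, and your invariant formulation is, if anything, slightly cleaner bookkeeping.
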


\begin{proof}
Let us prove it by induction on $n$. For $n=1$ we have that
$k\in\phi(\{i\},j_1)$. It means that $k\in i\star j_1$ and so, by
Lemma \ref{lema1}, $i\in k\star \overline{j}_1 =
\phi(\{k\},\overline{j}_1)$. Hence $\{\overline{j}_1\}$ is a
connection from $k$ to $i$.

Let us suppose that the assertion holds for any connection with $n\geq 1$, elements and let us show this assertion also holds for any connection $\{j_1,j_2,\dots,j_n,j_{n+1}\}.$

By denoting the set
$U:=\phi(\phi(\dots\phi(\{i\},j_1)\dots),j_{n})$ and taking into
the account the second condition of Definition \ref{connection} we
have that $k\in\phi(U,j_{n+1}).$ Then, by Lemma \ref{lema2},
$\phi(\{k\},\overline{j}_{n+1})\cap U\neq\emptyset$ and so we can
take $h\in U$ such that
\begin{equation}\label{eqq1}h\in\phi(\{k\},\overline{j}_{n+1}).\end{equation}
Since $h\in U$ we have that $\{j_1, j_2, \dots, j_{n-1}, j_n\}$ is
a connection from $i$ to $h$. Hence
$\{\overline{j}_n,\overline{j}_{n-1},\dots,\overline{j}_2,\overline{j}_1\}$
connects $h$ with $i$. From here and by Equation (\ref{eqq1}) we
obtain
$i\in\phi(\phi(\dots\phi(\phi(\{k\},\overline{j}_{n+1}),\overline{j}_n)\dots),\overline{j}_{1}).$
So $\{\overline{j}_{n+1},\dots,\overline{j}_2,\overline{j}_1\}$
connects  $k$ with $i$.
\end{proof}

\begin{proposition}
The relation $\sim$ in $I$, defined by $i\sim k$ if and only if $i$ is connected to $k$, is an equivalence relation.
\end{proposition}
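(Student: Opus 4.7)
The plan is to verify the three defining properties of an equivalence relation on $I$.

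Reflexivity is immediate, since Definition \ref{connection} explicitly declares every $i\in I$ to be connected to itself. Symmetry reduces to a direct application of Lemma \ref{lema3}: if $i\neq k$ and $\{j_1,\dots,j_n\}$ is a connection from $i$ to $k$, then $\{\overline{j}_n,\overline{j}_{n-1},\dots,\overline{j}_1\}$ is a connection from $k$ to $i$, so $k\sim i$.

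The only property that requires work is transitivity. Suppose $i\sim k$ and $k\sim h$ and assume for nontriviality that the three indices are distinct (otherwise we fall back on reflexivity or symmetry). Pick a connection $\{j_1,\dots,j_n\}$ from $i$ to $k$ and a connection $\{l_1,\dots,l_m\}$ from $k$ to $h$. My proposal is to concatenate them: show that $\{j_1,\dots,j_n,l_1,\dots,l_m\}$ is a connection from $i$ to $h$. The key auxiliary fact is the monotonicity of $\phi$ in its first argument, namely that $U\subseteq U'$ implies $\phi(U,j)\subseteq\phi(U',j)$, which is immediate from $\phi(U,j)=\bigcup_{a\in U}(a\star j)$.

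Applying this, set $U_0:=\{i\}$ and $U_r:=\phi(U_{r-1},j_r)$ for $1\leq r\leq n$. By hypothesis each $U_r$ is nonempty and $k\in U_n$. Now define $U_{n+s}:=\phi(U_{n+s-1},l_s)$ for $1\leq s\leq m$. Since $\{k\}\subseteq U_n$, monotonicity yields inductively
\[
\phi(\dots\phi(\{k\},l_1)\dots,l_s)\;\subseteq\;U_{n+s}
\]
for each $s$, and the left-hand sides are nonempty (condition 1 of the second connection) and contain $h$ when $s=m$ (condition 2). Thus every $U_{n+s}$ is nonempty and $h\in U_{n+m}$, which is exactly what the definition of connection demands for the concatenated sequence.

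I do not foresee a real obstacle here: reflexivity is by fiat, symmetry is handed to us by Lemma \ref{lema3}, and the only care needed in transitivity is the bookkeeping above, whose engine is the elementary monotonicity of $\phi$.
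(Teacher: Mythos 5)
Your proof is correct and follows the same route as the paper: reflexivity by fiat, symmetry via Lemma \ref{lema3}, and transitivity by concatenating the two connections. The only difference is that the paper dismisses the concatenation step as easy, whereas you supply the monotonicity-of-$\phi$ bookkeeping that justifies it.
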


\begin{proof}
The reflexive and symmetric character is given by Definition \ref{connection} an Lemma \ref{lema3}.

If we consider the connections $\{a_1,\dots,a_m\}$ and $\{b_1,\dots,b_n\}$ from $a$ to $b$ and from $b$ to $c$ respectively, then is easy to prove that $\{a_1,\dots,a_m,b_1,b_2,\dots,b_n\}$ is a connection from $a$ to $c$. So $\sim$ is transitive and consequently an equivalence relation.
\end{proof}

By the above Proposition we can introduce the quotient set
$I/ \sim := \{[i] : i \in I\},$ becoming $[i]$ the set of elements in $I$ which are connected to $i$.

\medskip

Recall that  a {\it submodule} $Y$ of a  module $V$ (respect to the linear space  $W$) is a linear  subspace of $V$ such that $YW \subset Y$. Our next aim is to associate an (adequate) submodule to each $[i] \in I/\sim$. We define the linear subspace $V_{[i]} :=\bigoplus_{j\in[i]}\mathbb{F}v_j.$

\begin{proposition}\label{lema_submodulo}
For any $i \in I/\sim$ we have that $V_{[i]}$ is a submodule of $V$.
\end{proposition}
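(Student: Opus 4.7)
The plan is to check the two defining requirements for a submodule. By construction $V_{[i]}$ is already a linear subspace, so only the stability condition $V_{[i]}\cdot W\subset V_{[i]}$ needs verification. Bilinearity of the action together with the fact that $\mathfrak{B}$ and $\mathfrak{B}'$ are bases reduces the problem to showing that for every $j\in[i]$ and every $\ell\in J$, the product $v_j w_\ell$ lies in $V_{[i]}$.

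If $v_j w_\ell = 0$ there is nothing to prove, so assume $v_j w_\ell\neq 0$. Since $\mathfrak{B}$ is multiplicative with respect to $\mathfrak{B}'$, there is a unique $k\in I$ with $0\neq v_j w_\ell\in\mathbb F v_k$; equivalently, $k\in j\star\ell=\phi(\{j\},\ell)$. The goal is to show $k\in[i]$, and since $j\in[i]$ and $\sim$ is an equivalence relation, it suffices to show $k\sim j$.

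If $k=j$ this is immediate. Otherwise $k\neq j$, and then $\{\ell\}\subset J\;\dot\cup\;\overline J$ meets the requirements of Definition \ref{connection} for being a connection from $j$ to $k$: condition (1) is vacuous (there are no intermediate $\phi$ iterates to check), while condition (2) is exactly $k\in\phi(\{j\},\ell)$, which we just established. Hence $j$ is connected to $k$, so $k\sim j\sim i$, i.e., $k\in[i]$, and therefore $v_j w_\ell\in\mathbb F v_k\subset V_{[i]}$.

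There is no real obstacle here; the argument is essentially a one-step unwinding of the definition of connection. The only point requiring mild care is keeping the reflexive case $k=j$ separate so that Definition \ref{connection} (which is stated for $i\neq k$) applies cleanly in the remaining case.
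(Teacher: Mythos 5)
Your proof is correct and follows essentially the same route as the paper: reduce by bilinearity to basis elements, and observe that a nonzero product $v_jw_\ell\in\mathbb F v_k$ yields the one-element connection $\{\ell\}$ from $j$ to $k$, so $k\in[i]$ by transitivity. Your explicit handling of the case $k=j$ (where Definition \ref{connection} does not literally apply) is a small point of care that the paper's own proof omits.
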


\begin{proof}
We need to check $V_{[i]}W \subset V_{[i]}$. Suppose there exist
$i_1 \in [i], j_1 \in [j]$ such that $0 \neq v_{i_1}w_{j_1} \in
v_n$, for some $n \in I$. Therefore $n \in \phi(\{i_1\},j_1)$.
Considering the connection $\{j_1\}$ we get $i_1 \sim n$, and by
transitivity $n \in [i]$.
\end{proof}

\begin{definition}\label{inherited_basis}\rm
We say that a submodule $Y \subset V$ admits a multiplicative basis $\mathfrak{B}_Y$ {\it inherited} from $\mathfrak{B}$ if $\mathfrak{B}_{Y}\subset\mathfrak{B}$.
\end{definition}

Observe that any submodule $V_{[i]} \subset V$ admits  an inherited basis $\mathfrak{B}_{[i]} := \{v_j :j\in[i]\}$. So we can assert

\begin{theorem}\label{theo1}
Let $V$ be a  module admitting a multiplicative basis
$\mathfrak{B}$  respect to a fixed basis of $W$. Then $$V =
\bigoplus_{[i]\in I/\sim}V_{[i]},$$ being any $V_{[i]} \subset V$
a submodule admitting a multiplicative basis $\mathfrak{B}_{[i]}$
inherited from $\mathfrak{B}$.
\end{theorem}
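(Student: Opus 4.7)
The plan is to assemble the theorem from the pieces already in place: the equivalence relation $\sim$ partitions the index set $I$, each class $[i]$ yields a submodule $V_{[i]}$ by Proposition \ref{lema_submodulo}, and the direct sum decomposition of $V$ will fall out of the fact that $\mathfrak{B}$ itself splits along the partition. So the real content is the direct sum, which in turn reduces to a basis-theoretic observation.

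First I would establish that $V = \sum_{[i] \in I/\sim} V_{[i]}$. Since $\sim$ is an equivalence relation on $I$, the family $\{[i]\}_{[i] \in I/\sim}$ is a partition of $I$; that is, $I = \dot{\bigcup}_{[i] \in I/\sim} [i]$. Because $\mathfrak{B} = \{v_i\}_{i \in I}$ spans $V$, and because the $v_j$ with $j \in [i]$ are by construction a basis of $V_{[i]}$, every element of $\mathfrak{B}$ lies in some $V_{[i]}$, giving the spanning assertion at once.

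Next I would show directness. Fix $[i_0] \in I/\sim$ and suppose some element of $V_{[i_0]}$ is also a sum of elements from the other $V_{[k]}$ with $[k] \neq [i_0]$. Expanding on $\mathfrak{B}$ and comparing coefficients, such an element is simultaneously a (finite) linear combination of $\{v_j : j \in [i_0]\}$ and of $\{v_j : j \in \dot{\bigcup}_{[k] \neq [i_0]} [k]\}$. Since the equivalence classes are pairwise disjoint, these two index sets are disjoint subsets of $I$, and since $\mathfrak{B}$ is a basis of $V$, the element must be zero. Hence $V_{[i_0]} \cap \sum_{[k] \neq [i_0]} V_{[k]} = 0$, and the sum is direct.

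Finally, I would verify that each $V_{[i]}$ admits a multiplicative basis inherited from $\mathfrak{B}$. By definition $\mathfrak{B}_{[i]} := \{v_j : j \in [i]\}$ is a basis of $V_{[i]}$ and $\mathfrak{B}_{[i]} \subset \mathfrak{B}$ in the sense of Definition \ref{inherited_basis}. For any $v_j \in \mathfrak{B}_{[i]}$ and $w_k \in \mathfrak{B}'$, the multiplicativity of $\mathfrak{B}$ gives either $v_j w_k = 0$ or $0 \neq v_j w_k \in \mathbb{F} v_m$ for some $m \in I$; in the latter case, by Proposition \ref{lema_submodulo}, the product lies in $V_{[i]}$, forcing $m \in [i]$ and hence $v_m \in \mathfrak{B}_{[i]}$. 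This confirms the multiplicativity of $\mathfrak{B}_{[i]}$ and completes the argument. The main obstacle is essentially conceptual rather than technical: one must be comfortable identifying the external direct sum of the $V_{[i]}$ with the internal decomposition of $V$ via the disjoint union of the bases, which is the only nontrivial step once Proposition \ref{lema_submodulo} is in hand.
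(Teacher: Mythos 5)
Your proposal is correct and follows the same route the paper takes (the paper states the theorem as an immediate consequence of Proposition \ref{lema_submodulo} and the observation that the equivalence classes partition $I$, leaving the details implicit). Your write-up simply fills in those details — the spanning, the directness via disjointness of the classes, and the multiplicativity of each inherited basis — all of which are verified correctly.
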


We recall that a  module $V$ is {\it simple} if its only submodules are $\{0\}$ and $V$.

\begin{corollary}\label{coro1}
If $V$ is simple then any couple of  elements of $I$ are connected.
\end{corollary}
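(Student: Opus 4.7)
The plan is to deduce the corollary directly from the decomposition given by Theorem \ref{theo1}. Applying that theorem, I would write
$$V = \bigoplus_{[i]\in I/\sim} V_{[i]},$$
where each $V_{[i]}$ is a submodule of $V$ admitting the inherited multiplicative basis $\mathfrak{B}_{[i]} = \{v_j : j \in [i]\}$. The first observation is that every $V_{[i]}$ is nonzero, since by construction $v_i \in V_{[i]}$ and $v_i \neq 0$ (it is an element of the basis $\mathfrak{B}$).

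Next I would invoke simplicity of $V$. Since $V_{[i]}$ is a submodule and $V_{[i]} \neq \{0\}$, simplicity forces $V_{[i]} = V$. But if there were two distinct equivalence classes $[i] \neq [k]$, then the direct sum decomposition would give $V_{[i]} \cap V_{[k]} = \{0\}$ (the bases $\mathfrak{B}_{[i]}$ and $\mathfrak{B}_{[k]}$ are disjoint subsets of $\mathfrak{B}$), contradicting $V_{[i]} = V_{[k]} = V$. Hence $I/\sim$ must consist of a single equivalence class, i.e. all elements of $I$ are mutually connected.

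There is essentially no obstacle here: the whole content sits in Theorem \ref{theo1}, and the corollary is a clean consequence of the fact that a nontrivial partition of $I$ under $\sim$ would produce a nontrivial submodule decomposition, incompatible with simplicity. The only minor point to be careful about is to note that each $V_{[i]}$ is genuinely nonzero so that simplicity can be applied.
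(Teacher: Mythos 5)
Your argument is correct and follows essentially the same route as the paper: apply the decomposition of Theorem \ref{theo1}, note each $V_{[i]}$ is a nonzero submodule, and use simplicity to conclude $V_{[i]}=V$, hence $[i]=I$. Your extra remark about the disjointness of the inherited bases just makes explicit why only one class can survive, which the paper leaves implicit.
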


\begin{proof}
The simplicity of $V$ applies to get that $V_{[i]} = V$ for some $[i]\in I/\sim$. Hence $[i] = I$ and so any couple of elements in $I$ are connected.
\end{proof}

\section{The minimal components}

In this section we show that, under mild  conditions, the decomposition of $V$ of  Theorem \ref{theo1} can be given by means of the family of its minimal submodules. We begin by introducing a concept of minimality for $V$ that agree with the one for algebras in  \cite{Yo_Arb_Alg}.

\begin{definition}\rm
A  module $V$, (over a linear space $W$), admitting a
multiplicative basis $\mathfrak{B}$ respect to fixed basis of $W$,
is said to be {\it minimal} if its unique nonzero submodule
admitting a multiplicative basis inherited from $\mathfrak{B}$ is
$V$.
\end{definition}

Let us also introduce the concept of $\star$-multiplicativity in the framework of  modules over linear spaces in a similar way to the analogous one for arbitrary algebras  (see \cite{Yo_Arb_Alg} for these notions and examples).

\begin{definition}\rm
We say that a module $V$ respect $W$ admits a $\star${\it -multiplicative basis} $\mathfrak{B} = \{v_i\}_{i\in I}$ respect to a fixed basis  $\mathfrak{B}'=\{l_j\}_{j\in J}$ of $W$, if it is multiplicative and given  $a,b \in I$ such that $b \in a \star j$ for some $j \in J\;\dot\cup\;\overline{J}$ then $v_b \in v_aW$.
\end{definition}

\begin{theorem}\label{theo2}
Let $V$ be a  module respect $W$ admitting a $\star$-multiplicative basis $\mathfrak{B} = \{v_i\}_{i\in I}$ respect to the basis $\mathfrak{B}' = \{w_j\}_{j\in J}$ of $W$. Then $V$ is minimal if and only if
the set of indexes $I$ has all of its elements connected.
\end{theorem}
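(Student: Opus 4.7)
The forward direction will be the easy one and will not actually require $\star$-multiplicativity. Assuming $V$ is minimal, I fix any $i\in I$; by Proposition \ref{lema_submodulo} the subspace $V_{[i]}$ is a submodule of $V$, and it carries the inherited multiplicative basis $\mathfrak{B}_{[i]}=\{v_j:j\in[i]\}$ (this basis is automatically multiplicative, because $V_{[i]}$ being a submodule forces any nonzero product $v_k w_j=\lambda v_l$ with $v_k\in\mathfrak{B}_{[i]}$ to satisfy $l\in[i]$). Since $v_i\in V_{[i]}$ this submodule is nonzero, so minimality will force $V_{[i]}=V$, and hence $[i]=I$.

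For the converse I plan to show that, when every pair of indices is connected, any nonzero submodule $Y$ of $V$ admitting a multiplicative basis $\mathfrak{B}_Y\subset\mathfrak{B}$ inherited from $\mathfrak{B}$ must coincide with $V$. Writing $\mathfrak{B}_Y=\{v_i:i\in I_Y\}$ for some nonempty $I_Y\subset I$, the goal becomes $I_Y=I$. The pivotal step---and the only place where $\star$-multiplicativity really enters---will be the following absorption property: if $a\in I_Y$ and $b\in a\star j$ for some $j\in J\;\dot\cup\;\overline{J}$, then $b\in I_Y$. To prove it I will invoke $\star$-multiplicativity to get $v_b\in v_aW$, then use $v_a\in Y$ together with $YW\subset Y$ to conclude $v_b\in Y$, and finally appeal to the linear independence of $\mathfrak{B}$ (which contains $\mathfrak{B}_Y$ as a basis of $Y$) to force $b\in I_Y$.

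With the absorption property in hand I will fix any $i_0\in I_Y$ and any $k\in I$ with $k\neq i_0$, take a connection $\{j_1,\dots,j_n\}$ from $i_0$ to $k$, and run a straightforward induction on $m$ to show
$$\phi(\phi(\cdots\phi(\{i_0\},j_1)\cdots),j_m)\subset I_Y,\quad m=1,\dots,n,$$
so that in particular $k\in I_Y$; this yields $I_Y=I$, and therefore $Y=V$, proving minimality. The main subtle point is the absorption step itself: ordinary multiplicativity already gives $v_b\in v_aW$ when the connecting symbol lies in $J$ (one simply divides out the scalar in $v_aw_j=\lambda v_b$), but for a symbol in $\overline{J}$ it only provides the reversed product $v_bw_{\overline{j}}\in\mathbb{F}v_a$, which does not by itself place $v_b$ inside $v_aW$. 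Bridging precisely this gap is the role of the $\star$-multiplicative hypothesis, and it is the hinge on which the converse implication turns.
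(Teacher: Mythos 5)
Your proposal is correct and follows essentially the same route as the paper: the forward implication via the submodules $V_{[i]}$ of Theorem \ref{theo1} (no $\star$-multiplicativity needed), and the converse via an induction along a connection, using $\star$-multiplicativity at each step to get $v_b\in v_aW\subset Y$ and hence $b\in I_Y$. Your remark that ordinary multiplicativity suffices for symbols in $J$ but that $\star$-multiplicativity is genuinely needed for symbols in $\overline{J}$ is accurate and is precisely the point the paper's hypothesis is designed to cover.
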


\begin{proof}
The first implication is similar to Corollary \ref{coro1}. To prove the converse, consider a nonzero submodule $Y \subset V$ admitting a multiplicative basis inherited from $\mathfrak B$. Then, for a certain $\emptyset \neq I_Y \subset I$, we can write $Y =\displaystyle\bigoplus_{i \in I_Y}\mathbb Fv_i$. Fix some $i_0 \in I_Y$ being then
\begin{equation}\label{eqq5}0 \neq v_{i_0}\in Y.\end{equation}

Let us show by induction on $n$ that if $\{j_1,j_2,\dots,j_n\}$ is any connection from $i_0$ to some $k \in I$ then  for any $h \in \phi(\phi(\cdots\phi(\{i_0\},j_1)\dots),j_n)$ we have that $0\neq v_h\in Y$.

In case $n=1$, we get $h\in\phi(\{i_0\},j_1)$. Hence $h\in i_0\star j_1$, then, taking into account that $Y$ is a submodule of $V$, by $\star$-multiplicativity of $\mathfrak B$ and Equation (\ref{eqq5}) we obtain $v_h\in v_{i_0}W \subset Y$.

Suppose now the assertion holds for any connection $\{j_1,j_2,\dots,j_n\}$ from $i_0$ to any $r \in I$ and consider some arbitrary connection $\{j_1, j_2, \dots , j_n, j_{n+1}\}$ from $i_0$ to any $k \in I$. We know that for $x \in U$, where $U := \phi(\phi(\cdots \phi(\{i_0\}, j_1)\cdots ), j_n)$, the element
\begin{equation}\label{eqq6}0\neq v_x \in Y.\end{equation}

Taking into account that the fact $h \in \phi(\phi(\cdots \phi(\{i_0\}, j_1)\dots ), j_{n+1})$ means $h \in \phi(U, j_{n+1}),$ we have that $h \in x \star j_{n+1}$ for some $x \in U$. From here, the $\star$-multiplicativity of $\mathfrak B$ and Equation (\ref{eqq6}) allow us to get $v_h \in v_xW \subset Y$ as desired.

Since given any $k \in I$ we know that $i_0$ is connected to $k$, we can assert by the above observation that $\mathbb Fv_k \subset Y$. We have shown $V=\bigoplus_{k\in I}\mathbb Fv_k \subset Y$ and so $Y = V$.
\end{proof}

\begin{theorem}
Let $V$ be a  module, over the linear space $W$,  admitting a $\star$-multiplicative basis $\mathfrak{B}$ respect to a fixed basis of $W$. Then $V = \bigoplus_k V_k$ is the direct sum of the family of its minimal  submodules, each one admitting a $\star$-multiplicative basis inherited from $\mathfrak B$.
\end{theorem}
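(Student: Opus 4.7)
The plan is to obtain the decomposition of the preceding Theorem \ref{theo1}, namely $V=\bigoplus_{[i]\in I/\sim}V_{[i]}$, and to upgrade each $V_{[i]}$ from merely admitting a multiplicative inherited basis to being a minimal submodule admitting a $\star$-multiplicative inherited basis. Once these two enhancements are proved, the decomposition becomes the desired one: simply set $V_k := V_{[i]}$ as $[i]$ ranges over $I/\sim$.

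First I would check that $\mathfrak{B}_{[i]}=\{v_j : j \in [i]\}$ is $\star$-multiplicative for $V_{[i]}$ with respect to $\mathfrak{B}'$. Since $\mathfrak{B}_{[i]} \subset \mathfrak{B}$ and $V_{[i]}$ is a submodule of $V$ (by Proposition \ref{lema_submodulo}), the products $v_a w_j$ with $a\in[i]$ take the same value in $V_{[i]}$ and in $V$; so the $\star$-operation associated to $V_{[i]}$ restricted to $[i]\times(J\;\dot\cup\;\overline{J})$ agrees with that of $V$, as one checks by observing that if $v_a w_j \in \mathbb{F} v_m \setminus \{0\}$ then $m\in[i]$ by closure of $V_{[i]}$, and if $v_m w_j \in \mathbb{F} v_a \setminus \{0\}$ then by Lemma \ref{lema1} we get $m \sim a$ and so $m\in[a]=[i]$. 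The $\star$-multiplicativity condition $b\in a\star j \Rightarrow v_b\in v_aW$ is therefore inherited directly from $\mathfrak{B}$.

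Next I would show that each $V_{[i]}$ is minimal by appealing to Theorem \ref{theo2}. By the previous paragraph, the $\star$-operation of $V_{[i]}$ coincides with the restriction of the $\star$-operation of $V$ to indices in $[i]$, so the connection relation computed inside $V_{[i]}$ coincides with the restriction to $[i]$ of the connection relation on $I$. In particular, all intermediate sets $\phi(\phi(\cdots\phi(\{a\},j_1)\cdots),j_r)$ appearing in a connection from $a\in[i]$ to $b\in[i]$ lie inside $[i]$, so these connections certify the same equivalences in both settings. Since every two elements of $[i]$ are connected by definition of the equivalence class, Theorem \ref{theo2} yields that $V_{[i]}$ is minimal.

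I expect the main (though mild) obstacle to be precisely the bookkeeping of the previous paragraph: one must be explicit that the new variables $\overline{j}$ and the $\star$-operation are interpreted consistently when we view $V_{[i]}$ as a module in its own right, and that the hypothesis of Theorem \ref{theo2} applies to the restricted index set $[i]$. Once this verification is made, the theorem follows by substituting these improvements into the decomposition $V=\bigoplus_{[i]\in I/\sim}V_{[i]}$ from Theorem \ref{theo1}.
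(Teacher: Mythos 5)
Your proposal is correct and follows essentially the same route as the paper: invoke Theorem \ref{theo1} for the decomposition $V=\bigoplus_{[i]}V_{[i]}$, observe that each $\mathfrak{B}_{[i]}$ is a $\star$-multiplicative basis of $V_{[i]}$ with all indices in $[i]$ connected, and conclude minimality via Theorem \ref{theo2}. The only difference is that you spell out the bookkeeping (that the $\star$-operation and the connection relation of $V_{[i]}$ agree with the restrictions of those of $V$), which the paper simply asserts; your extra verification is sound and fills a genuine, if mild, gap in the published argument.
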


\begin{proof}
By Theorem \ref{theo1} we have $V = \displaystyle\bigoplus_{[i]\in
I/\sim} V_{[i]}$ is the direct sum of the submodules $V_{[i]}$.
Now for any $V_{[i]}$ we have that $\mathfrak B_{[i]}$ is a
$\star$-multiplicative basis where all of the elements of $[i]$
are connected. Applying Theorem \ref{theo2} to any $V_{[i]}$ we
have that the decomposition $V=\displaystyle\bigoplus_{[i]\in
I/\sim} V_{[i]}$ satisfies the assertions of the theorem.
\end{proof}

\end{document}